\date{}
\newtheorem{theorem}{Theorem}
\newtheorem{lemma}{Lemma}
\newtheorem{proposition}{Proposition}
\date{}
\newenvironment{proof}[1][\hspace{-1.0ex}]%
{\par\addvspace{1mm}{\it Proof\hspace{1.0ex}{#1}.} }%
{\quad$\blacktriangle$\par\addvspace{1mm}}
    \newif\ifNoRemark
    \def\addtheorem#1#2#3#4{ 
    \ifthenelse{\expandafter\isundefined\csname the#2\endcsname}{\newcounter{#2}}{}
    \newenvironment{#1}[1][\global\NoRemarktrue]
     {\par\addvspace{2mm}\noindent 
       \refstepcounter{#2}{\bf #3~\csname the#2\endcsname
      \vphantom{##1}\ifNoRemark.\ \else\ (##1).\fi}\begingroup #4}%
     {\endgroup\par\addvspace{1mm}\global\NoRemarkfalse}
    \expandafter\newcommand\csname b#1\endcsname{\begin{#1}}
    \expandafter\newcommand\csname e#1\endcsname{\end{#1}}
    }
\begin{document}

\title{Constructions of Pairs of Orthogonal Latin Cubes\footnote{\,The research was
carried out at the Sobolev Institute of Mathematics at the expense
of the Russian Science Foundation 18-11-00136.}}

\author{ Vladimir N. Potapov\\
Sobolev Institute of Mathematics, Novosibirsk, Russia \\
vpotapov@math.nsc.ru\\}

\maketitle

\begin{abstract}
A pair of orthogonal latin cubes of order $q$ is equivalent to an
MDS code with distance $3$ or to an ${\rm OA}_1(3,5,q)$  orthogonal
array. We construct pairs of orthogonal latin cubes for a sequence
of previously unknown orders $q_i=16(18i-1)+4$ and
$q'_i=16(18i+5)+4$. The minimal new obtained parameters of
orthogonal arrays are ${\rm OA}_1(3,5,84)$.

\textit{Keywords}--- latin square, latin cube, MOLS, MDS code, block
design, Steiner system, orthogonal array

\end{abstract}

MSC2010: 05B15, 94B05,  05B05

\section{Introduction}

A {\it latin square} of order $q$ is  a $q\times q$ array of $q$
symbols where each symbol occurs exactly once in every row and in
every column.  A $k$-dimensional array satisfying the same condition
is called a {\it latin $k$-cube}. Any $2$-dimensional axis-aligned
plane (face) of a latin $k$-cube of order $q$ is a latin square of
order $q$ by definition. Two latin squares are {\it orthogonal} if,
when they are superimposed, every ordered pair of symbols appears
exactly once. For brevity, a pair of orthogonal latin squares is
called POLS. If in a set of latin squares, any two latin squares are
orthogonal then this set is called a system of Mutually Orthogonal
Latin Squares (MOLS). Two latin $k$-cubes are {\it orthogonal} if
any pair of corresponding $2$-dimensional faces of these cubes is a
POLS. Bose, Shrikhande and Parker \cite{BSP} proved that for each
positive integer $q$, $q\neq 2,6$, there exists  POLS of order $q$
and POLS of orders $2$ and $6$ are not exist. As a corollary we
obtain nonexistence of pairs of orthogonal latin $k$-cubes of orders
$2,6$. A nonexistence of pairs of orthogonal latin $k$-cubes of
orders $q$ if $k>q-1$ follows from the sphere-packing (Hamming)
bound. But the complete spectrum of possible orders of pairs of
orthogonal latin $k$-cubes remains unknown for any $k\geq 3$. Ten is
the minimum order for which it is not known whether  pairs of
orthogonal latin $3$-cubes exist. In this paper we construct pairs
of orthogonal latin $3$-cubes for a sequence  of previously unknown
orders $q_i=16(18i-1)+4$ and $q'_i=16(18i+5)+4$. New pairs of
orthogonal latin $3$-cubes are created as files. These files are
available on the website
https://ieee-dataport.org/open-access/graeco-latin-cubes. The
minimum new order for a pair of orthogonal latin cubes obtained by
proposed construction is $84$.

Let $Q_q=\{0,\dots,q-1\}$. A subset $C$ of $Q_q^d$ is called an
$MDS(t,d,q)$ {\it code} (of order $q$, code distance $t+1$ and
length $d$) if $|C\cap\Gamma|=1$ for each $t$-dimensional
axis-aligned plane $\Gamma$. Ethier and Mullen \cite{EMullen} proved
that $MDS(2,2+s,q)$ codes are equivalent to pairs of orthogonal
latin $s$-cubes of order $q$. There are two well-known methods for
constructing  MDS codes. If $q$ is a prime power, then we can
consider  $Q_q$ as the Galois field $GF(q)$. MDS codes obtained as
the solution of an appropriate system of linear equations over
$GF(q)$ are known as Reed--Solomon codes. If there exists an
$MDS(t,d,p_1)$ code and an $MDS(t,d,p_2)$ code, then we get an
$MDS(t,d,p_1p_2)$ code by a product construction (McNeish's
theorem). We represent a new construction of $MDS(2,5,q)$ codes that
is similar to  Wilson's construction  for pairs of orthogonal latin
squares with aligned subsquares (see, \cite{DK} and \cite{Zhu}).

The problem of existence of MDS codes with non-prime-power orders is
 connected to the problem of existence of Steiner block designs. By
methods of random graph theory Keevash \cite{Keevash1} and Glock et
al. \cite{Glock} proved that the natural divisibility conditions are
sufficient for existence of Steiner system $S(t,k,n)$ apart from a
finite number of exceptional $n$ for given fixed $t$ and $k$. It is
not difficult to prove that any MDS code is equivalent to a
transversal in an appropriate multipartite hypergraph (see
\cite{Pot}). Then the existence of  $MDS(t,d,q)$ codes   follows
from   \cite{Keevash3} (Theorem 1.7) apart from a finite number of
exceptional $q$ for given fixed $t$ and $d$. In the last section of
this paper we propose a construction of pairs of orthogonal latin
$3$-cubes based on Steiner block designs.

Note that an $MDS(2,q+1,q)$ code  (a pair of orthogonal
$(q-1)$-cubes) is an  $1$-error correcting perfect code. The
existence of such codes is a well-known problem if $q$ is not a
prime power (see \cite{MS}).

\section{Connection between MDS codes and orthogonal systems}

An ${\rm OA}_\lambda (s,d,q)$  orthogonal array is a $\lambda
q^s\times d$ array whose entries are from $Q_q$ such that in every
subset of $s$ columns of the array, every $s$-tuple from $Q_q^s$
appears in exactly $\lambda$ rows. Further we consider only
orthogonal arrays with $\lambda=1$. In this case every column of the
orthogonal array is a function $f:Q_q^s\rightarrow Q_q$. A set of
columns of an orthogonal array with $\lambda=1$ is called an
orthogonal system. In other words, a system consisting of $d$
functions $f_1,\dots,f_d$, $f_i:Q_q^s\rightarrow Q_q$ ($d\geq s$) is
{\it orthogonal} if for each subsystem $f_{i_1},\dots,f_{i_s}$
consisting of  $s$ functions it holds
$$ \{(f_{i_1}(\overline{x}),\dots,f_{i_s}(\overline{x})) \ |\ \overline{x}\in Q_q^{s}\}=Q_q^{s}.$$
If the system remains orthogonal after substitution any constants
for each subset of variables, then it is called {\it
strong-orthogonal}. If the number of variables is two, then such
system is a system of MOLS (see \cite{EMullen}). If $s=3$, it is a
set of Mutually Orthogonal Latin Cubes (MOLC). Ethier and Mullen
\cite{EMullen} proved that MDS codes are equivalent to
strong-orthogonal systems. Moreover, by a replacement of variables
it is possible to obtain a strong-orthogonal system consisting of
$d-s$ functions from  any orthogonal system consisting of $d$
functions over $Q_q^s$.

\
\begin{proposition}\label{proEM}
The following conditions are equivalent:\\
1) a system consisting of $t$ functions $f_1,\dots,f_t$,
$f_i:Q_q^s\rightarrow Q_q$  is { strong-orthogonal};\\
2) the set $C=\{(x_1,\dots,x_s,f_1(\overline{x}),\dots
f_t(\overline{x})) : x_i\in Q_q\}$ is an $MDS(t,t+s,q)$ code; \\
3) the array consisting of all elements of $C$ as rows is an ${\rm
OA}_1 (s,t+s,q)$ orthogonal array.
\end{proposition}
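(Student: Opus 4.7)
The plan is to prove $(3) \Leftrightarrow (2)$ by directly unpacking definitions, and then to prove $(2) \Leftrightarrow (1)$ by carefully analyzing which coordinates are fixed when one cuts out an axis-aligned plane. A useful preliminary observation is that $|C| = q^s$, since the first $s$ coordinates of an element of $C$ uniquely determine $\overline{x}$, so $C$ has exactly $q^s$ elements and can be viewed as a $q^s \times (t+s)$ array.

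For $(3) \Leftrightarrow (2)$, I would note that a $t$-dimensional axis-aligned plane in $Q_q^{t+s}$ is the fiber of the projection onto some $s$-subset of the $t+s$ coordinates. Hence $|C \cap \Gamma| = 1$ for every such plane $\Gamma$ says precisely that the projection of $C$ onto every $s$-subset of coordinates is a bijection onto $Q_q^s$. In the array picture, this is verbatim the defining condition of the orthogonal array ${\rm OA}_1(s, t+s, q)$: every $s$-tuple appears exactly once in every $s$-column sub-array.

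For $(2) \Leftrightarrow (1)$, I would parameterize a plane $\Gamma$ by splitting its $s$ fixed coordinates into $r$ positions among the first $s$ (``variable'' slots, say $I \subseteq \{1,\dots,s\}$ with $|I| = r$) and $s - r$ positions among the last $t$ (``function'' slots, say $J \subseteq \{1,\dots,t\}$ with $|J| = s-r$), for some $0 \le r \le s$. Fixing the coordinates in $I$ to constants corresponds to substituting those constants for the corresponding variables in every $f_j$, leaving functions of the remaining $s - r$ variables $\overline{y} \in Q_q^{s-r}$. The MDS condition $|C \cap \Gamma| = 1$ then reads: for every prescribed target in $Q_q^{s-r}$ on the $J$-coordinates, exactly one $\overline{y}$ realizes it via the subsystem $(f_j)_{j \in J}$; equivalently, that restricted $(s-r)$-subsystem is a bijection $Q_q^{s-r} \to Q_q^{s-r}$. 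Ranging over all choices of $I$, all constant assignments on $I$, and all $(s-r)$-subsets $J \subseteq \{1,\dots,t\}$ produces precisely the strong-orthogonality condition.

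The main challenge I anticipate is purely organizational: indexing the two types of fixed coordinates (variable slots versus function slots) and checking that the quantification ``for every $t$-dimensional plane'' on one side matches ``for every subset of substituted variables and every $(s-r)$-subsystem of functions'' on the other. I do not foresee a substantive analytic obstacle; the proposition essentially records that the three formulations are three views of the same combinatorial object.
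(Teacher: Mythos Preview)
Your argument is correct and proceeds exactly as one would expect: the equivalence $(2)\Leftrightarrow(3)$ is a direct translation between ``each $t$-plane meets $C$ once'' and ``each $s$-column subarray sees each $s$-tuple once'', and for $(1)\Leftrightarrow(2)$ your splitting of the $s$ fixed coordinates of a plane into $r$ variable-slots and $s-r$ function-slots cleanly identifies the MDS condition with bijectivity of every $(s-r)$-subsystem after substituting constants into $r$ variables, which is precisely strong orthogonality. The bookkeeping you flag (matching the quantification over planes with the quantification over $(I,\text{constants},J)$) is indeed the only thing to check, and you have handled it.

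As for comparison with the paper: the paper does \emph{not} supply its own proof of this proposition. It is stated as a known result, attributed in the surrounding text to Ethier and Mullen~\cite{EMullen}. Your write-up therefore fills in what the paper leaves to the literature; the argument you give is the natural definition-unpacking proof and is presumably in the same spirit as the cited source.
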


A projection (punctured code) of an $MDS(t,t+s,q)$ code onto a
hyperplane  is equal to a removal of one of the functions $f_i$. The
punctured code  is an $MDS(t-1,t+s-1,q)$ code by Proposition
\ref{proEM}. Consequently, an existence of $MDS(2,2+s,q)$ code or a
pair of orthogonal latin $s$-cubes of order $q$  follows from an
existence of  an $MDS(t,t+s,q)$ code if  $t\geq 2$.

Sometimes the terms ``latin cube" and ``$t$ mutually orthogonal
latin cubes" is used for  ${\rm OA}_q (2,4,q)$   and ${\rm OA}_q
(2,t+3,q)$  orthogonal arrays respectively. It is easy to see that
our definition of a system of MOLC is stronger.

\section{Constructions of MDS codes}

The Hamming distance $\rho$ between two elements of $Q_q^d$ is the
number of positions at which the corresponding symbols are
different. In this paper we use only the Hamming distance. The code
distance of $C\subset Q_q^d$ is $\rho(C)=\min\limits_{x\in C,y\in
C,x\neq y}\rho(x,y)$. The distance between two subsets $A,B\subset
Q_q^d$ is $\min\limits_{x\in A,y\in B}\rho(x,y)$. The Singleton
bound  for the cardinality of a code $C\subset Q_q^d$ with  distance
$t+1$  is $|C|\leq q^{d-t}$.
 MDS codes  achieve equality in this
 bound.

\begin{proposition}\label{proNM5}
A subset $C\subset Q_q^d$ with code distance $t+1$ is an {\rm MDS}
code if and only if $|C|=q^{d-t}$.
\end{proposition}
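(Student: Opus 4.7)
The plan is to pass between the combinatorial definition of an MDS code (one codeword in every $t$-dimensional axis-aligned plane) and the metric picture (distance $t+1$ and cardinality $q^{d-t}$) by a double-counting argument over axis-aligned planes.

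First I would count. A $t$-dimensional axis-aligned plane $\Gamma$ in $Q_q^d$ is specified by choosing $d-t$ coordinate positions to fix and assigning values to them, so the total number of such planes is $\binom{d}{t}q^{d-t}$, and every point of $Q_q^d$ lies in exactly $\binom{d}{t}$ of them.

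For the forward implication, assume $C$ is an MDS code, so $|C\cap\Gamma|=1$ for every $t$-dimensional axis-aligned $\Gamma$. Summing over all such $\Gamma$ and using that each codeword is covered $\binom{d}{t}$ times gives $|C|\binom{d}{t}=\binom{d}{t}q^{d-t}$, hence $|C|=q^{d-t}$.

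For the converse, assume $\rho(C)\ge t+1$ and $|C|=q^{d-t}$. The key observation is that two distinct codewords lying in a common $t$-dimensional axis-aligned plane would agree on the $d-t$ fixed coordinates and hence differ in at most $t$ positions, contradicting $\rho(C)\ge t+1$; therefore $|C\cap\Gamma|\le 1$ for every such $\Gamma$. Now the same double count gives $\sum_\Gamma |C\cap\Gamma|=|C|\binom{d}{t}=\binom{d}{t}q^{d-t}$, which equals the total number of planes, so equality $|C\cap\Gamma|=1$ must hold for every $\Gamma$. This is exactly the MDS condition. There is no real obstacle here — the only thing to be careful about is the bookkeeping of which coordinates are fixed versus free, and the observation that the Singleton bound $|C|\le q^{d-t}$ is implicit in, but not needed for, the argument, since the double count directly supplies the needed equality.
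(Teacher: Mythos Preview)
Your proof is correct. The paper does not actually supply a proof of this proposition; it merely states it, prefaced by the remark that the Singleton bound gives $|C|\le q^{d-t}$ for any code of distance $t+1$ and that MDS codes attain equality. Your double-counting argument over $t$-dimensional axis-aligned planes is the standard way to make this precise, and it cleanly handles both directions: the forward implication recovers $|C|=q^{d-t}$ from the plane condition, and the converse uses the distance hypothesis to get $|C\cap\Gamma|\le 1$ and then forces equality everywhere by the count. Nothing is missing; you have filled in exactly the details the paper leaves to the reader.
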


The Hamming bound for the cardinality of code $C\subset Q_q^d$ with
distance $3$ is $|C|\leq \frac{q^d}{1+(q-1)d}$. Then the
inequalities $q^{d-2}\leq \frac{q^d}{1+(q-1)d}$ or $d\leq q+1$ are a
necessary condition for the existence of an $MDS(2,d,q)$ code.
Consequently, an $MDS(2,5,3)$ code or a pair of orthogonal latin
cubes of order $3$ do not exist. Moreover, by puncturing codes we
have a necessary condition  $s\leq q-1$ for the existence of an
$MDS(t,t+s,q)$  if $t\geq 2$. For linear codes this condition $s\leq
q-1$ is in \cite{MS}.

Let $q$ be a prime power and let $Q_q=GF(q)$.  A linear
$k$-dimensional subspace $C\subset Q_q^d$ with  distance $t$ is
called $[d,k,t]$ code over $GF(q)$. By Proposition \ref{proNM5} we
see that any $[d,d-t,t+1]$ code over $GF(q)$ is an $MDS(t,d,q)$
code. By using a well-known construction of a linear MDS code (see
\cite{MS}, Chapters 10,11, or \cite{Glock}, Theorem 9.1) by means of
an appropriate parity-check matrix  over $GF(q)$ we can conclude
that the following proposition is true.

\begin{proposition}\label{proNMb}
Let $q$ be a prime power. Then
 for each  integers $d\leq q+1$  and $\varrho$,
$3\leq \varrho < d$, there exists a linear (over $GF(q)$) {\rm MDS}
code $C\subset Q_q^d$ with code distance $\varrho$.
\end{proposition}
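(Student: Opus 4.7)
The plan is to exhibit an explicit $(\varrho-1)\times d$ parity-check matrix $H$ over $GF(q)$ such that every $\varrho-1$ of its columns are linearly independent, and then to combine this with the Singleton bound and Proposition \ref{proNM5}. Once such $H$ is produced, the code $C=\{c\in GF(q)^d: Hc^\top=0\}$ has dimension $d-\varrho+1$, so $|C|=q^{d-\varrho+1}$; the linear independence of every $\varrho-1$ columns of $H$ forces $\rho(C)\geq \varrho$, while the Singleton bound gives $\rho(C)\leq \varrho$. Hence $\rho(C)=\varrho$ and, by Proposition \ref{proNM5} applied with $t=\varrho-1$, the code $C$ is a linear $MDS(\varrho-1,d,q)$ code of the prescribed distance.

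For $d\leq q$ the natural choice is the Vandermonde-type matrix whose $j$-th column is $(1,\alpha_j,\alpha_j^2,\ldots,\alpha_j^{\varrho-2})^\top$, where $\alpha_1,\dots,\alpha_d$ are any $d$ distinct elements of $GF(q)$. Any $\varrho-1$ of these columns form a square Vandermonde matrix whose determinant is the classical product $\prod_{i<j}(\alpha_{k_j}-\alpha_{k_i})$, which is nonzero because the $\alpha$'s are distinct, and this immediately gives the required linear independence.

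The only mildly subtle case is $d=q+1$, where one runs out of evaluation points in $GF(q)$. Here the plan is to take the $(\varrho-1)\times q$ Vandermonde indexed by all elements of $GF(q)$ and to adjoin one extra ``point at infinity'' column $e_{\varrho-1}=(0,\ldots,0,1)^\top$. If a selection of $\varrho-1$ columns avoids this extra column, the Vandermonde argument of the previous paragraph applies verbatim; if it includes the extra column, cofactor expansion along the row containing the lone $1$ reduces the determinant to a $(\varrho-2)\times(\varrho-2)$ Vandermonde in the remaining $\varrho-2$ finite evaluation points, which is again nonzero. Verifying this extension is the main (and essentially only) technical obstacle; once it is handled, the framework set up in the first paragraph produces the linear MDS code of distance $\varrho$ required by the proposition.
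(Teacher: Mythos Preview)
Your proof is correct and is precisely the approach the paper invokes: the paper does not give a self-contained argument but simply appeals to the standard parity-check (Reed--Solomon / extended Reed--Solomon) construction of linear MDS codes in \cite{MS}, Chapters 10--11, which is exactly the Vandermonde-plus-point-at-infinity matrix you wrote down. One minor wording slip: in the $d=q+1$ case the clean reduction comes from expanding along the extra \emph{column} $e_{\varrho-1}$ (not the row), yielding the $(\varrho-2)\times(\varrho-2)$ Vandermonde minor you describe.
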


We will say that an $MDS(t,d,q)$ code $M_0$ is a {\it super}
$MDS(t,d,q)$ code if there exist $MDS(t+1,d,q)$ code $M_1$ and
$MDS(t+2,d,q)$ code $M_2$ such that $M_2\subset M_1\subset M_0$.

By removal of any row from a parity-check matrix of a linear {\rm
MDS} code with distance $t+1$, we obtain a parity-check matrix of an
{\rm MDS} code with distance $t$ that contains the original code.
Thus Propositions \ref{proNMbb} and \ref{proNMbc} follow from
Proposition \ref{proNMb}.

\begin{proposition}\label{proNMbb}
Let $q$ be a prime power. Then
 for each  integers $d\leq q+1$  and $\varrho$,
$3\leq \varrho < d-2$, there exists a linear over $GF(q)$ super {\rm
MDS} code $C\subset Q_q^d$ with code distance $\varrho$.
\end{proposition}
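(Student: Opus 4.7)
The plan is to iterate the remark stated just before the proposition: starting from a linear MDS code and its parity-check matrix, each removal of a row produces a larger linear MDS code whose distance has dropped by one. Two such removals, applied to an MDS code of distance $\varrho+2$, will produce exactly the chain $M_2\subset M_1\subset M_0$ required by the definition of a super MDS code.

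To set this up, I would first invoke Proposition \ref{proNMb} to obtain a linear MDS code $M_2\subset Q_q^d$ of distance $\varrho+2$; this is legitimate because the hypothesis $3\le\varrho<d-2$ gives $3\le\varrho+2<d\le q+1$, matching the constraints of that proposition. Let $H$ be a parity-check matrix of $M_2$; it has $\varrho+1$ rows. I then delete one row of $H$, obtaining a $\varrho\times d$ matrix that, by the remark, is the parity-check matrix of a linear MDS code $M_1$ of distance $\varrho+1$ with $M_1\supset M_2$. Deleting a further row from that matrix yields, in the same way, an MDS code $M_0\supset M_1$ of distance $\varrho$. The containments $M_2\subset M_1\subset M_0$ come for free, since each successive parity-check matrix imposes a proper subset of the linear conditions of the previous one, so the kernels strictly enlarge.

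The only nontrivial ingredient is the remark itself, and hence the point of the plan that deserves care is verifying that the parity-check matrix supplied by Proposition \ref{proNMb} really does admit row removals that preserve the MDS property. For the standard Vandermonde/Reed--Solomon parity-check matrix whose $j$-th column is $(1,\alpha_j,\alpha_j^2,\dots,\alpha_j^{\varrho})$ for $d\le q+1$ distinct $\alpha_1,\dots,\alpha_d\in GF(q)$, this is automatic: stripping rows keeps the matrix in a generalised Vandermonde form whose $t$-column minors are still nonzero, so the shorter-height parity-check matrices continue to define MDS codes. Thus, using that construction and performing the two successive row deletions, one obtains the desired super MDS code $M_0$ of distance $\varrho$, completing the argument.
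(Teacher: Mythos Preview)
Your proposal is correct and follows exactly the paper's approach: the paper's entire proof is the remark that deleting a row from the parity-check matrix of a linear MDS code of distance $t+1$ yields a parity-check matrix of a linear MDS code of distance $t$ containing the original, applied twice to the code furnished by Proposition~\ref{proNMb}. Your only slip is cosmetic---you cannot choose $d$ distinct $\alpha_j\in GF(q)$ when $d=q+1$---but the standard extended Reed--Solomon column (the ``point at infinity'') together with deleting the bottom row handles this case without changing the argument.
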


\begin{proposition}\label{proNMbc}
Let $q$ be a prime power. Then
 for each  integers $d\leq q+1$  and $\varrho$,
$3\leq \varrho < d-1$, there exists a linear over $GF(q)$  {\rm MDS}
code $C\subset Q_q^d$ with code distance $\varrho$ that is an union
of $q$ disjoint linear over $GF(q)$  {\rm MDS} code $C\subset Q_q^d$
with code distance $\varrho+1$.
\end{proposition}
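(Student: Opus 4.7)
The plan is to build the desired code $C$ by peeling off a single parity-check equation from the code produced by Proposition \ref{proNMb}, and then read off the coset decomposition directly from linearity.

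First, I would apply Proposition \ref{proNMb} with distance $\varrho+1$ in place of $\varrho$. The hypothesis $3\le\varrho<d-1$ gives $3<\varrho+1<d$ and $d\le q+1$, so the proposition yields a linear $[d,d-\varrho,\varrho+1]$ MDS code $C'\subset Q_q^d$, specified as the kernel of a parity-check matrix $H\in GF(q)^{\varrho\times d}$.

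Next, following the remark stated just before Proposition \ref{proNMbb}, I would remove a single (arbitrary) row of $H$ to obtain a matrix $H'\in GF(q)^{(\varrho-1)\times d}$, and define $C\subset Q_q^d$ to be the kernel of $H'$. By that remark, $C$ is a linear MDS code with code distance $\varrho$ and with $C'\subset C$. Computing cardinalities gives $|C|/|C'|=q^{d-\varrho+1}/q^{d-\varrho}=q$, so as an abelian group $C$ splits as the disjoint union of exactly $q$ cosets of $C'$. Concretely, the cosets are indexed by the value, in $GF(q)$, taken by the linear functional corresponding to the removed row of $H$; equivalently, they are the level sets of that functional restricted to $C$.

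It remains to check that each coset $a+C'$ with $a\in C$ is an MDS code with distance $\varrho+1$. Minimum distance and cardinality are both translation invariant, so $|a+C'|=|C'|=q^{d-\varrho}$ and $\rho(a+C')=\rho(C')=\varrho+1$; hence by Proposition \ref{proNM5} each coset is an MDS$(\varrho,d,q)$ code, and each is an affine translate of (and in particular $GF(q)$-linearly isomorphic to) the linear code $C'$.

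I do not expect any substantive obstacle: the whole statement is essentially bookkeeping around the standard ``drop one parity check'' construction, and Proposition \ref{proNMb} plus the parity-check remark do the real work. The only small caveat is linguistic, namely that among the $q$ cosets only the one containing $0$ is literally a $GF(q)$-subspace, so one should read ``linear MDS code'' in the conclusion as ``affine translate of a linear MDS code''; if a strictly linear decomposition were required, the claim would of course be impossible for a group-theoretic reason.
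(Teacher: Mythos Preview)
Your argument is correct and is precisely the approach the paper takes: the paper's proof of this proposition consists solely of the remark preceding Proposition~\ref{proNMbb} (delete one row of the parity-check matrix of the code from Proposition~\ref{proNMb}), and you have simply written out the coset-decomposition details that the paper leaves implicit. Your closing caveat about ``linear'' versus ``affine translate of linear'' is a fair reading of a slightly loose statement; the paper uses this proposition only via Proposition~\ref{proMOLS}, where what is actually needed is that the larger code splits into $q$ disjoint MDS codes of distance $\varrho+1$, so the affine interpretation suffices.
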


The set $Q_{q_1q_2}$ can be considered as the Cartesian product
$Q_{q_1}\times Q_{q_2}$. Consequently, we can identify
$Q^d_{q_1}\times Q^d_{q_2}$ and the hypercube $Q^d_{q_1q_2}$. Thus
if $C_1\subset Q^d_{q_1}$ and $C_2\subset Q^d_{q_2}$ then
$$C_1\times C_2=\{((x_1,y_1),(x_2,y_2),\dots,(x_d,y_d)) :
(x_1,\dots,x_d)\in C_1, (y_1,\dots,y_d)\in C_2\}\subset
Q^d_{q_1q_2}.$$

\begin{proposition}[McNeish]\label{proNMb1}
Suppose  $M_1$ is an (super) $MDS(t,d,q_1)$ code and $M_2$ is an
(super) $MDS(t,d,q_2)$ code. Then $M_1\times M_2$ is  an (super)
$MDS(t,d,q_1q_2)$ code.
\end{proposition}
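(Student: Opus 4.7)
The plan is to reduce the claim to the characterization in Proposition~\ref{proNM5} (an MDS code is determined by its cardinality and distance) and then handle the super case by applying the same argument to each layer of the nested chain of subcodes.

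First, I would compute cardinalities. Since the product $M_1\times M_2$ consists of all coordinate-wise pairings and has $|M_1|\cdot |M_2|=q_1^{d-t}\cdot q_2^{d-t}=(q_1q_2)^{d-t}$ elements, it has exactly the size required of an $MDS(t,d,q_1q_2)$ code by Proposition~\ref{proNM5}.

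Second, I would verify that the code distance of $M_1\times M_2$ is at least $t+1$. For two distinct codewords $((x_i,y_i))_{i=1}^d$ and $((x'_i,y'_i))_{i=1}^d$ in $M_1\times M_2$, let $A=\{i:x_i\neq x'_i\}$ and $B=\{i:y_i\neq y'_i\}$; then the set of positions where the two product words differ is exactly $A\cup B$. If $(x_i)\neq(x'_i)$ in $M_1$, then $|A|\geq t+1$; if $(y_i)\neq(y'_i)$ in $M_2$, then $|B|\geq t+1$; and at least one case must occur since the two product words are distinct. Hence $|A\cup B|\geq t+1$, giving $\rho(M_1\times M_2)\geq t+1$. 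Combined with the cardinality, Proposition~\ref{proNM5} yields that $M_1\times M_2$ is an $MDS(t,d,q_1q_2)$ code.

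For the super statement, suppose $M_1\supset M_1'\supset M_1''$ and $M_2\supset M_2'\supset M_2''$ are the nested chains witnessing that $M_1$ and $M_2$ are super MDS, with distances $t+1,t+2,t+3$ on each side. Applying the two steps above to $(M_1',M_2')$ and $(M_1'',M_2'')$ in place of $(M_1,M_2)$ (with $t$ replaced by $t+1$ and $t+2$, respectively) shows that $M_1'\times M_2'$ is an $MDS(t+1,d,q_1q_2)$ code and $M_1''\times M_2''$ is an $MDS(t+2,d,q_1q_2)$ code. The nesting $M_1''\times M_2''\subset M_1'\times M_2'\subset M_1\times M_2$ is immediate from set theory, so $M_1\times M_2$ is super MDS as required.

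I do not anticipate a real obstacle here; the only minor subtlety is the set-theoretic identity $\{i:(x_i,y_i)\neq(x'_i,y'_i)\}=A\cup B$, which is what makes distances add under the product in the ``at least'' direction and guarantees the bound $t+1$ is preserved rather than merely $\min(t_1,t_2)+1$ or similar.
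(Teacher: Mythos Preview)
Your proof is correct and complete. The paper itself does not supply a proof of this proposition; it is simply stated as McNeish's theorem (a classical result), so there is nothing to compare against beyond noting that your argument via Proposition~\ref{proNM5} (cardinality plus minimum distance) is the standard route and would serve perfectly well as the missing justification.
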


By combining results of Propositions \ref{proNMb} and \ref{proNMb1}
we obtain that  $MDS(2,5,q)$ codes  exist if
$q=2^{\delta_2}3^{\delta_3}5^{\delta_5}\dots$, where $\delta_2\neq
1$ and $\delta_3\neq 1$.

Let $A\subset Q_q$. Denote by $\pi_A$ a function mapping from
$Q_p\times Q_q$ to $Q_{p(q-|A|)+|A|}$ by the following rule:
$\pi_A(x,y)=(x,y)$ if $y\not \in A$, and $\pi_A(x,y)=y$ if $y\in A$.
Let $C_1\subset Q^d_{p}$ and $C_2\subset Q^d_{q}$. Denote
$C_1\times_A C_2=\{(\pi_A(z_1),\dots,\pi_A(z_d)) : \overline{z}\in
C_1\times C_2\}$.
For any $C\subset Q^d_q$ we denote by $U_t(C)$  the $t$-neighborhood
of $C$, i.\,e., $U_t(C)=\{x\in Q^d_q : \exists y\in C, \rho(x,y)\leq
t\}$.


A set $D\subset Q^d_q$ is called an $MDS(t,d,q)$ with $j$-$A$-hole
($t+1\leq j\leq d$) if\\ 1) the code distance of $D$ is equal to $t+1$;\\
2) $D\cap U_{d-j+t}(A^d)=\varnothing$;\\ 3)
$U_{d-j+t}(D)=Q^d_q\backslash A^d$;\\ 4)
$|D|=\sum\limits_{k=0}^{j-t-1}{d-t \choose k}(q-|A|)^{d-t-k}|A|^k$.

For $t=2$ and $d=5$ we get that an $MDS(2,5,q)$ code with
$5$-$A$-hole has cardinality $q^3 -|A|^3$ and an $MDS(2,5,q)$ code
with $4$-$A$-hole has cardinality $(q-|A|)^3 +3(q-|A|)^2|A|$.

Suppose that $M$ is an $MDS(t,d,q)$ code, $a\in Q_q$,  and
$\overline{a}=(a,\dots,a)\in M$. It is easy to see that $M\setminus
\{\overline{a}\}$ is an $MDS(t,d,q)$ with $d$-$\{a\}$-hole. Let
$A\subset Q_q$ and let $M$ be an MDS code, $M\subset Q_q^d$. A
subset $M\cap A^d$   is called a {\it subcode}  of  $M$ if it is an
MDS code  in $A^d$ with the same code distance as $M$. If $M\cap
A^d$ is a subcode, then $M\setminus A^d$ is an $MDS(t,d,q)$ with
$d$-$A$-hole.

Let us formulate a known construction of a POLS (see \cite{DK},
Chapter 4) in introduced terms.

\begin{proposition}\label{theoMOLS0}
Suppose that
\begin{itemize} \item $ M_1\subset M$ is
$MDS(3,4,p)$ code and $M$ is $MDS(2,4,p)$ code, \item $D$ is an
$MDS(2,4,q)$ code,
\item $E$ is an $MDS(2,4,q_1-q)$ code on alphabet $A$,\item $F$ is
an $MDS(2,4,q_1)$ code with $4$-$A$-hole, where $|A|=q_1-q$.
\end{itemize}
 Then
 the set $C=E\cup (M_1\times_A F) \cup
((M\setminus M_1)\times D)$ is an $MDS(2,4,(p-1)q+q_1)$ code.
\end{proposition}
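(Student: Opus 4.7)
The plan is to apply Proposition~\ref{proNM5}: since an $MDS(2,4,N)$ code on an alphabet of size $N=(p-1)q+q_1$ has exactly $N^2$ codewords, it suffices to verify both the cardinality equality $|C|=N^2$ and that the code distance of $C$ is at least $3$. To set up the bookkeeping I would first fix a decomposition of the target alphabet as the disjoint union $A \sqcup \bigl(Q_p\times(Q_{q_1}\setminus A)\bigr)$, of sizes $q_1-q$ and $pq$ respectively (identifying $Q_q$ with $Q_{q_1}\setminus A$). Under this identification $E$ lives in $A^4$, the block $(M\setminus M_1)\times D$ lives in the ``pair region'' $(Q_p\times(Q_{q_1}\setminus A))^4$, and a codeword of $M_1\times_A F$ has its $i$-th coordinate in $A$ exactly when $y_i\in A$; so, by the $4$-$A$-hole property of $F$, at most one of the four coordinates is ``pure-$A$''.

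For the cardinality I would observe that $\pi_A$ is injective on $M_1\times F$: a collision is possible only at coordinates where $y_i\in A$ (where $\pi_A$ forgets $x_i$), but an $F$-codeword has at most one such coordinate, so any collision would force two distinct $M_1$-codewords to agree in three positions, contradicting $\rho(M_1)=4$. Hence $|M_1\times_A F|=p(q_1^2-|A|^2)$, and adding the contributions $|E|=(q_1-q)^2$ and $|(M\setminus M_1)\times D|=p(p-1)q^2$ a short algebraic manipulation (using $|A|=q_1-q$) collapses the total to $\bigl((p-1)q+q_1\bigr)^2$.

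For the distance bound six cases must be checked. Within each block, distance $\geq 3$ is essentially the MDS hypothesis on $E$, $D$, or $M$ respectively (the $M_1\times_A F$ case reusing the same injectivity argument, together with $\rho(M_1)=4$ to handle pairs sharing the same $F$-component). The three cross-cases involving $E$ are immediate from the alphabet-profile observation: $E$ has all four coordinates in $A$, while the other two blocks have at most one, so they disagree in at least three positions.

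The main obstacle---and the only place where the containment $M_1\subset M$ is really used---is the cross-case $M_1\times_A F$ versus $(M\setminus M_1)\times D$. The latter has every coordinate in the pair region, so the former's (at most one) pure-$A$ coordinate is an automatic disagreement. At the remaining positions, agreement at coordinate $i$ requires $(x_i,y_i)=(x'_i,y'_i)$ and in particular $x_i=x'_i$. But $x\in M_1$ and $x'\in M\setminus M_1$ are distinct codewords of $M$ with $\rho(M)=3$, so they coincide at no more than one position. Hence the two $C$-codewords agree in at most one coordinate, giving distance $\geq 3$, and Proposition~\ref{proNM5} then finishes the argument.
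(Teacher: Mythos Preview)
Your proof is correct. The paper does not actually prove Proposition~\ref{theoMOLS0} (it is cited as a known construction and illustrated by an example), but your argument---count to $N^2$, verify code distance $\geq 3$ inside and between the three blocks via the alphabet profiles and the inclusion $M_1\subset M$, then apply Proposition~\ref{proNM5}---is exactly the template the paper uses in its proof of the analogous Theorem~\ref{theoMOLS}.
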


Consider an example of code $C$ that is described in Proposition
\ref{theoMOLS0}. An $MDS(2,4,p)$ code is equivalent to a POLS.
Determine $p=q_1=4$ and $q=3$. Let $M$ corresponds to the pair $
   \begin{array}{ccccccccc}
  0 &1 &2 & 3&\quad           & 0 &1 &2 &3  \\
  3 &2 &1 &0&\quad           &  2 &3 &0 &1  \\
  1 &0 &3 &2 &\quad         &3 &2 &1 &0 \\
  2 &3 &0 &1 &\quad           &      1& 0& 3&2 \\
     \end{array} $
and let  $M_1$ corresponds to main diagonals of this squares.
Suppose that $D$ corresponds to the pair $\begin{array}{ccccccc}
  b &c &e &\quad           & b &c &e  \\
  c &e &b &\quad           &  e &b &c  \\
  e &b &c  &\quad         &c &e &b  \\
       \end{array} $, $F$ corresponds to the pair\\  $
   \begin{array}{ccccccccc}
  a &b &c &e&\quad          & b &a &e &c  \\
  c &e &a &b &\quad         &a &b &c &e \\
  b &a &e &c &\quad         & c& e& a&b \\
  e &c &b &. &\quad         & e &c &b &.  \\
     \end{array} $  and $E=(a,a,a,a)$. Then the constructed code $C$ is equivalent to the
     following
     POLS of order $13$:

  $
   \begin{array}{rrrrrrrrrrrrrrrrr}
  a &0b &0c |&1b &1c  &1e|&  2b &2c &2e|  &3b &3c &3e |& 0e \\
    0c &0e &a |&1c &1e&1b|&  2c &2e &2b|  &3c &3e &3b  |& 0b \\
  0b &a &0e |&1e &1b  &1c|&  2e &2b &2c|  &3e &3b &3c  |&  0c\\
 \hline
  3b &3c &3e |& a &2b &2c |&1b &1c  &1e|&  0b &0c  &0e|& 2e\\
  3c &3e &3b |& 2c &2e &a |&1c &1e  &1b|&  0c &0e  &0b|&  2b\\
  3e &3b &3c |& 2b &a &2e |&1e &1b  &1c|&  0e &0b  &0c|& 2c\\
  \hline
  1b &1c  &1e|& 0b &0c  &0e|&a &3b &3c|&  2b &2c &2e|&3e\\
  1c &1e  &1b|& 0c &0e  &0b|&3c &3e &a|&  2c &2e &2b|&3b\\
  1e &1b  &1c|& 0e &0b  &0c|&3b &a &3e|&  2e &2b &2c|&3c\\
  \hline
  2b &2c &2e  |& 3b &3c &3e |&  0b &0c  &0e|& a &1b &1c |& 1e     \\
  2c &2e &2b  |& 3c &3e &3b |&  0c &0e  &0b|& 1c &1e &a |& 1b    \\
  2e &2b &2c  |& 3e &3b &3c |&  0e &0b  &0c|& 1b &a &1e |&  1c\\
\hline
0e &0c &0b |&2e &2c &2b |& 3e &3c &3b |&1e &1b &1c |&   a\\
   \end{array} $

\vskip7mm
   $
    \begin{array}{rrrrrrrrrrrrrrrrr}
   0b &a &0e   |&1b &1c  &1e|&  2b &2c &2e|  &3b &3c &3e |& 0c \\
   a &0b &0c   |&1e &1b&1c  |&  2e &2b &2c|  &3e &3b &3c  |& 0e \\
   0c &0e &a   |&1c &1e  &1b|&  2c &2e &2b|  &3c &3e &3b  |&  0b\\
  \hline
    2b &2c &2e|&  3b &a &3e |&0b &0c  &0e|& 1b &1c  &1e |& 3c\\
    2e &2b &2c|&  a &3b &3c |&0e &0b  &0c|& 1e &1b&1c   |&  3e\\
    2c &2e &2b|&  3c &3e &a |&0c &0e  &0b|& 1c &1e  &1b |& 3b\\
   \hline
   3b &3c &3e|&2b &2c &2e |&1b &a &1e |& 0b &0c  &0e|&1c\\
   3e &3b &3c|&2e &2b &2c |&a &1b &1c |& 0e &0b  &0c|&1e\\
   3c &3e &3b|&2c &2e &2b |&1c &1e &a |& 0c &0e  &0b|&1b\\
   \hline
   1b &1c  &1e|& 0b &0c  &0e|& 3b &3c &3e |&2b &a &2e |& 2c     \\
   1e &1b&1c  |& 0e &0b  &0c|& 3e &3b &3c |&a &2b &2c |& 2e    \\
   1c &1e  &1b|& 0c &0e  &0b|& 3c &3e &3b |&2c &2e &a |&  2b\\
 \hline
 0e &0c &0b |&3e &3c &3b |& 1e &1c &1b |&2e &2b &2c |&   a\\
    \end{array} $

\begin{theorem}\label{theoMOLS}
Suppose that
\begin{itemize} \item $M_2\subset M_1\subset M$ is a super
$MDS(2,5,p)$ code, \item $D$ is an $MDS(2,5,q)$ code, \item $E$ is
an $MDS(2,5,q_1-q)$ code on alphabet $A$,\item $F$ is an
$MDS(2,5,q_1)$ code with $4$-$A$-hole,\item  $G$ is an
$MDS(2,5,q_1)$ code with $5$-$A$-hole, where $|A|=q_1-q$.
\end{itemize}
 Then
 the set $C=E\cup (M_2\times_A G) \cup ((M_1\setminus M_2)\times_A F)\cup
((M\setminus M_1)\times D)$ is an $MDS(2,5,(p-1)q+q_1)$ code.
\end{theorem}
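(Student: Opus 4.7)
The plan is to verify the two defining properties of an $MDS(2,5,(p-1)q+q_1)$ code: the right cardinality $((p-1)q+q_1)^3$ and code distance at least $3$, then apply Proposition \ref{proNM5}. The ambient alphabet has size $p(q_1-|A|)+|A|=pq+|A|=(p-1)q+q_1$ via the identification built into $\pi_A$, with the ``singleton part'' being $A$ itself and the ``pair part'' being $Q_p\times(Q_{q_1}\setminus A)$. Each of the four pieces of $C$ sits in a distinct ``shape'' parametrized by the number of coordinates lying in $A$: $E$ uses $5$ such coordinates, $M_2\times_A G$ uses at most $2$ (since $G\cap U_2(A^5)=\varnothing$), $(M_1\setminus M_2)\times_A F$ at most $1$ (since $F\cap U_3(A^5)=\varnothing$), and $(M\setminus M_1)\times D$ exactly $0$.

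Next I would count. Injectivity of each product map uses the distance of the $M$-factor paired with the ``non-$A$'' bound of the $q_1$-factor: $M_2$ has distance $5$ and $g$ has $\geq 3$ non-$A$ coords; $M_1$ has distance $4$ and $f$ has $\geq 4$ non-$A$ coords; for the last piece $D$ gives $5$ non-$A$ coords. So the pieces have sizes $|A|^3,\ p(q_1^3-|A|^3),\ (p^2-p)((q_1-|A|)^3+3(q_1-|A|)^2|A|),\ (p^3-p^2)q^3$, and a telescoping computation gives $(pq+|A|)^3$.

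The real content is distance $\geq 3$. Within a piece this follows from the same $M$-factor/non-$A$-coord pairing: when the $q_1$-factor is constant and the $M$-factor varies, the number of disagreements is at least (distance of $M$-factor)$+$(non-$A$ coords)$-5$, giving $3$ in each of the cases $(5,3),(4,4),(3,5)$; when the $q_1$-factor varies the distance is inherited from that factor. For cross-piece comparisons the classification by number-of-$A$-coordinates is the key: if one element has $k_1$ coords in $A$ and the other has $k_2$, then at the $|k_1\triangle k_2|$ positions where exactly one lies in $A$ the outputs differ automatically, and at the remaining positions (both in pair-form) the $M$-factors provide most of the slack. The trickiest case is $M_2\times_A G$ vs.\ $(M_1\setminus M_2)\times_A F$, where $|S_g|\leq 2,|S_f|\leq 1$ and the two $m$'s lie in $M_1$ (distance $4$); a short case split on $(|S_g|,|S_f|,|S_g\cap S_f|)$ combined with the fact that $M_1$-elements agree in at most one coordinate handles it. Cases involving $E$ are immediate from the $A$-coord bounds, and $(M\setminus M_1)\times D$ versus either intermediate piece uses distance $3$ of $M$ together with $k_i\leq 2$ or $\leq 1$.

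Once distance $\geq 3$ is established, the cardinality computation together with Proposition \ref{proNM5} finishes the proof. I expect the main obstacle to be the cross-piece case analysis, especially between the two ``$\times_A$'' pieces, since each requires a careful accounting of how disagreements in the $M$-coordinates interact with the positions where an $A$-coordinate collapses a pair to a singleton; but the nested super-MDS structure is precisely calibrated so that the distance deficits always add up to at least $3$.
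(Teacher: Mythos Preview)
Your proposal is correct and follows essentially the same route as the paper: verify injectivity of $\pi_A$ on each piece to get $|C|=(pq+|A|)^3$, check code distance $\geq 3$ within and across the four pieces using the calibrated pairing (distance of the $M$-factor versus the bound on $A$-coordinates of the $q_1$-factor), and finish with Proposition~\ref{proNM5}. The only tactical difference is that for the delicate cross case $(M_1\setminus M_2)\times_A F$ versus $M_2\times_A G$ the paper argues asymmetrically---using only that $f$ has at most one $A$-coordinate together with $\rho(M_1)=4$ to get $4-1=3$ directly---whereas you propose a symmetric case split on $(|S_g|,|S_f|,|S_g\cap S_f|)$; both work.
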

\begin{proof}
By the hypotheses of the theorem for any $y\in G$ there exist three
$i\in \{1,\dots,5\}$ such that $y_i\not \in A$. Since code distance
of $M_2$ equals $5$, for any $x,x'\in M_2$ all coordinates are
different. Consequently, if $(x,y)\neq (x',y')$ then $\pi_A(x,y)\neq
\pi_A(x',y')$ for $x,x'\in M_2$ and $y,y'\in G$. Therefore
$|M_2\times_A G|=|M_2\times G|=|M_2||G|$. By the same way we can
prove that $|M_2\times_A F|=|M_2||F|$ and $|M_1\times_A
F|=|M_1||F|$. Then it holds
$$|C|=|E|+|M_2||G|+(|M_1|-|M_2|)|F|+(|M|-|M_1|)|D|=$$
$$(q_1-q)^3+p(q_1^3-(q_1-q)^3)+(p^2-p)(q^3+3q^2(q_1-q))+
(p^3-p^2)q^3=(pq+q_1-q)^3.$$ The code distance of $X\times Y$ is the
minimum of the code distances of $X$ and $Y$. If elements of $Y$
contains not more than $k$ symbols from $A$ then $\rho(X\times_A
Y)\geq \min(\rho(X)-k,\rho(Y))$. Hence the interior distances of the
codes $E$, $M_2\times_A G$, $(M_1\setminus M_2)\times_A F)$ and
$(M\setminus M_1)\times D$ are not less than $3$ by the hypotheses
of the theorem. The distance between codes $(M\setminus M_1)\times
D$ and $E$ equals $5$. The distance between $(M\setminus M_1)\times
D$ and $(M_1\setminus M_2)\times_A F$ (or $M_2\times_A G$) is not
less than the distance between $(M\setminus M_1)\times D$ and
$(M_1\setminus M_2)\times F$ (or $M_2\times G$). This distance is
not less than the distance between $M\setminus M_1$ and
$M_1\setminus M_2$ (or $M_2$), i.\,e., it is not less than the code
distance of $M$.

We have that $U_2(E)\cap F=U_2(E)\cap G=\varnothing$ by the
definition of a code with $j$-$A$-hole. Thus the distance between
$E$ and  $(M_1\setminus M_2)\times_A F$ (or $M_2\times_A G$) is not
less than the distance between  $E$ and  $F$ or $G$, i.\,e., it is
not less than $3$.

The distance between  $M_1\setminus M_2$ and $M_2$ is equal to $4$.
Take $(x_0,x_1,x_2,x_3,x_4)$ from $M_1\setminus M_2$.
Each element of $(x_0,x_1,x_2,x_3,x_4)\times_A F$ contains not more
than one symbol from $A$. Consequently, the distance between
$(x_0,x_1,x_2,x_3,x_4)\times_A F$ and $M_2\times_A G$ is not less
than $4-1=3$.

By the Singleton bound (Proposition \ref{proNM5})  $C$ is an MDS
code.
\end{proof}

It is easy to see that the MDS code $C$  constructed by using the
theorem above contains subcodes of orders $q$ and $q_1$. These
subcodes  are $\overline{x}\times D$, where $\overline{x}\in
M\setminus M_1$, and $E\cup (\overline{x}\times_A G)$, where
$\overline{x}\in M_2$.

\begin{proposition}\label{proMOLS}
Let $k\leq p$ and  $i\in \{1,\dots,k\}$. Suppose
\begin{itemize} \item $M$ is an  $MDS(2,5,p)$ code that
contains $k$ disjoint $MDS(3,5,p)$ codes $C_i$,\item $D$ is an
$MDS(2,5,q)$ code,\item $F_i$  is an $MDS(2,5,q_1)$ code over
alphabet $Q_q\cup A_i$ with $4$-$A_i$-hole, where $|A_i|=q_1-q$,
$A_i\cap A_j=\varnothing$ if $i\neq j$.\end{itemize}
 Then
 the set $S=(\bigcup_{i=1}^k C_i\times_{A_i} F_i)\cup ((M\setminus \bigcup_{i=1}^k C_i)\times D)$
 is an $MDS(2,5,(p-k)q+kq_1)$ code with $4$-$B$-hole, where $B=\bigcup_{i=1}^k A_i$.
\end{proposition}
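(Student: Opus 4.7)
The plan is to mirror Theorem~\ref{theoMOLS} and verify in turn the four clauses of the definition of an $MDS(2,5,n)$ code with $4$-$B$-hole, where $n=(p-k)q+kq_1$ and $|B|=k(q_1-q)$, so $n-|B|=pq$.

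First I would establish that $(x,y)\mapsto\bigl(\pi_{A_i}(x_1,y_1),\dots,\pi_{A_i}(x_5,y_5)\bigr)$ is a bijection from $C_i\times F_i$ onto $C_i\times_{A_i}F_i$. If two pairs $(x,y),(x',y')$ are identified, then at each coordinate where $x,x'$ disagree we must have $y_j,y'_j\in A_i$; because $F_i$ has a $4$-$A_i$-hole each of $y,y'$ has at most one coordinate in $A_i$, and because $C_i$ has distance~$4$ this forces $x=x'$ and then $y=y'$. This is precisely the injectivity argument from Theorem~\ref{theoMOLS}, and a direct count using $|C_i|=p^2$, $|F_i|=q^3+3q^2(q_1-q)$, $|D|=q^3$ gives
$$|S|=kp^2\bigl(q^3+3q^2(q_1-q)\bigr)+(p^3-kp^2)q^3=(pq)^3+3(pq)^2k(q_1-q)=(n-|B|)^3+3(n-|B|)^2|B|,$$
which is clause~(4). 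For the distance bound $\rho(S)\ge 3$ I would copy Theorem~\ref{theoMOLS}'s analysis inside each $C_i\times_{A_i}F_i$, and for cross-terms combine three ingredients: the pairwise disjointness of the $A_i$'s (so $A_i$- and $A_j$-symbols at the same position automatically differ), the disjointness of $D$ from every $A_i$, and the fact that the ``$M$-parts'' of two codewords from different constituents lie in $M$ and therefore differ in at least three positions. Clause~(2) is then immediate: an element of $C_i\times_{A_i}F_i$ has at most one coordinate in $A_i\subset B$ and none in $B\setminus A_i$, while an element of $(M\setminus\bigcup_iC_i)\times D$ has no $B$-coordinate at all, so every element of $S$ is at Hamming distance $\ge 4$ from $B^5$.

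For clause~(3), the equality $U_3(S)=Q_n^5\setminus B^5$, I would use a projection argument. Since $\rho(S)\ge 3$ the projection of $S$ onto any three coordinates is injective; by clause~(2) its image lies in the set of $3$-tuples with at most one entry in $B$, which has exactly $(n-|B|)^2(n+2|B|)=|S|$ elements, so the projection is in fact a bijection onto this set. Given $x\in Q_n^5\setminus B^5$ with at most three $B$-coordinates, I would pick three coordinates of $x$ whose values contain at most one entry in $B$; the bijection then produces $s\in S$ matching $x$ at those three positions, so $\rho(s,x)\le 2$.

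The main obstacle is the residual case, when $x$ has exactly four coordinates in $B$: no triple of coordinates of $x$ has at most one $B$-entry, so the previous step fails. There I would select the unique non-$B$ coordinate $j_0$, any $B$-coordinate $j_1$, a third coordinate $j_2$, and some value $c\notin B$; the triple $(x_{j_0},x_{j_1},c)$ has exactly one $B$-entry, so the bijection yields $s\in S$ with $(s_{j_0},s_{j_1},s_{j_2})=(x_{j_0},x_{j_1},c)$. By clause~(2), $s$ has at most one $B$-coordinate, already spent at $j_1$, so its two remaining coordinates lie outside $B$ whereas $x$'s values there lie in $B$, giving exactly three mismatches and $\rho(s,x)=3$, completing the verification.
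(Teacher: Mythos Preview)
Your proof is correct and for clauses (1), (2) and (4) follows the paper essentially verbatim. The genuine difference is in clause (3). The paper constructs the required nearby codeword directly: given $\overline w$ with at least one $B$-coordinate (say $a_1\in A_1$ in position~1 and $(x_0,y_0)$ in position~0), it pulls a vector $\overline z\in F_1$ with $(z_0,z_1)=(y_0,a_1)$ from the $4$-$A_1$-hole property of $F_1$, then a vector $\overline x\in C_1$ with first coordinate $x_0$ from the MDS property of $C_1$, and checks that $\overline x\times_{A_1}\overline z$ agrees with $\overline w$ in the first two positions.

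Your projection argument replaces this explicit construction by the counting identity $|S|=(n-|B|)^3+3(n-|B|)^2|B|$, which forces the projection of $S$ onto any three coordinates to be a bijection onto the set of triples with at most one $B$-entry. This is neater in two respects: it handles uniformly the case where $\overline w$ has no $B$-coordinate at all (which the paper's ``WLOG $a_1\in A_1$'' glosses over), and it avoids having to justify separately that $F_1$ realises every pair of values with one entry in $A_1$ and one outside. The price is the extra ``residual'' step for $\overline w$ with exactly four $B$-coordinates, which you handle correctly by perturbing one coordinate before invoking the bijection. The paper's construction, by contrast, treats that case with no extra work, since it only needs two coordinates of $\overline w$ to begin with.
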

\begin{proof}
By the hypotheses of the proposition for any $y\in F_i$ there exists
$j_1,j_2\in 1,\dots,5$ such that $y_{j_1}, y_{j_2}\not \in A_i$.
Since code distance of $C_i$ equals $4$,  any $x,x'\in C_i$ coincide
in one coordinate at most. Consequently, if $(x,y)\neq (x',y')$ then
$\pi_{A_i}(x,y)\neq \pi_{A_i}(x',y')$ for $x,x'\in C_i$ and $y,y'\in
F_i$. Then $|C_i\times_{A_i} F_i|=|C_i||F_i|=|C_1||F_1|$.
  By direct calculation we obtain the following equalities:
$$|S|=k|C_1||F_1|+(|M|-k|C_1|)|D|=kp^2(q^3+3q^2(q_1-q))+(p^3-kp^2)q^3=$$ $$(pq)^3+3(pq)^2k(q_1-q).$$
The distance between $(M\setminus \bigcup_{i=1}^k C_i)\times D$ and
$\bigcup_{i=1}^k (C_i\times_{A_i} F_i)$ is not less than the
distance between $M\setminus (\bigcup_{i=1}^k C_i)$ and
$\bigcup_{i=1}^k C_i$. Since $A_i\cap A_j=\varnothing$ if $i\neq j$,
the distance between $ C_i\times_{A_i} F_i$ and $ C_j\times_{A_j}
F_j$ is not less than the distance between $C_i$ and $C_j$.  For
$i=1,\dots,k$ we have
$\rho(C_i\times_{A_i}F_i)\geq\min(\rho(C_i)-1,\rho(F_i))=3$. The
code distance of $(M\setminus \bigcup_{i=1}^k C_i)\times D$ are not
less than the minimum of  the code distances of $D$ and $M$.
Therefore, the code distance of $S$ equals $3$.

 Let us  prove that $S\cap U_3((\bigcup_{i=1}^k
 {A_i})^5)=\varnothing$. By definition of $4$-$A_i$-hole, each element of
 $F_i$ contains not more than one symbol from $A_i$. So, each element of
 $S$ contains not more than one symbol from $\bigcup_{i=1}^k
 {A_i}$.

Let us  prove that $U_3(S)=(Q_q\cup(\bigcup_{i=1}^k
 {A_i}))^5\backslash (\bigcup_{i=1}^k
 {A_i})^5$. Consider any vector $\overline{w}$ with $4$ or less
  coordinates from $\bigcup_{i=1}^k
 {A_i}$. Without lost of generality, we take
 $\overline{w}=((x_0,y_0),a_1, w_2, w_3, w_4)$, where
 $a_1\in A_1$. Since $F_1$ is an $MDS(2,5,q_1)$ code with $4$-$A_1$-hole,
  there is a vector
   $\overline{z}=(y_0,a_1,z_2,z_3,z_4)\in F_1$.
Since $C_1$ is an $MDS(3,5,p)$ code,  there exists a vector
$\overline{x}=(x_0,x_{1},x_{2},x_{3},x_{4})\in C_1$. Then the
distance between vectors $\overline{x}\times_{A_1}\overline{z}$ and
$\overline{w}$ is equal to $3$.
\end{proof}


\begin{lemma}\label{lemMOLS}
There exists an $MDS(2,5,6)$ code with $4$-$\{a,b\}$-hole.
\end{lemma}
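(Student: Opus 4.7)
The plan is to give an explicit construction of $D\subset Q_6^5$. Setting $B = Q_6\setminus A$ (a $4$-element set), I need to build $|D| = 4^3 + 3\cdot 4^2\cdot 2 = 160$ codewords with pairwise Hamming distance at least $3$, each having at most one coordinate in $A$, and with $U_3(D)\supseteq Q_6^5\setminus A^5$. The natural split is $D = D_0\cup D_1$ by the number of $A$-coordinates in a codeword, i.e.\ $D_0 = D\cap B^5$ and $D_1 = D\setminus D_0$.

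For $D_0$ I would take a linear $MDS(2,5,4)$ code over $GF(4)\cong B$ (size $4^3 = 64$), whose existence is guaranteed by Proposition~\ref{proNMb}. Then $D_1$ must contain $96$ codewords each with exactly one $A$-coordinate. Grouping these by the slot $(i,v)\in\{1,\dots,5\}\times A$ recording the position and value of the unique $A$-coordinate, and removing that position, each slot $D_{i,v}$ becomes a subset of $B^4$ with pairwise Hamming distance at least $3$. A natural first attempt is $|D_{i,v}| = 16$ in exactly six carefully chosen slots (e.g.\ all slots with $i\in\{3,4,5\}$), each realized as an $MDS(2,4,4)$ code over $GF(4)$; other distributions (for instance $12$ codewords in eight slots) are also compatible with the cardinality count.

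Verification then splits into three tasks. Distance within each $D_{i,v}$ and within $D_0$ is handled by the MDS property. The inter-component distances (between $D_0$ and the various $D_{i,v}$, and between different slots of $D_1$) must be checked; here the slot allocations and the $GF(4)$-linear defining relations have to be coordinated so that no two codewords agree in three or more positions. The most delicate task is the covering $U_3(D)\supseteq Q_6^5\setminus A^5$: the case $w\in B^5$ is immediate from the MDS property of $D_0$, but for $w$ with two or three $A$-coordinates the covering codeword can match at most one $A$-position, so the construction must guarantee a $D_1$-codeword whose $A$-coordinate sits at one of $w$'s $A$-positions and whose $B$-coordinates match $w$ in at least one further position. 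This covering condition is the main obstacle, and it drives the precise choice of slot distribution and of the internal linear structure of each $D_{i,v}$. Since the parameters are small, the argument can finally be closed by an explicit tabulation, in the style of the POLS example preceding Theorem~\ref{theoMOLS}.
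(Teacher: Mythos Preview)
Your decomposition has a fatal obstruction, not in the covering step you flag as ``the main obstacle'', but already in the minimum-distance requirement between $D_0$ and $D_1$. If $D_0\subset B^5$ is a full $MDS(2,5,4)$ code of size $64$, then no word with exactly one $A$-coordinate can lie at Hamming distance $\ge 3$ from $D_0$. Indeed, take $d'\in Q_6^5$ with $d'_i\in A$ and $d'_j\in B$ for $j\neq i$; pick any three indices $j_1,j_2,j_3\in\{1,\dots,5\}\setminus\{i\}$. Because $D_0$ is an MDS code with $|D_0|=4^3$, there is a (unique) $d\in D_0$ with $d_{j_\ell}=d'_{j_\ell}$ for $\ell=1,2,3$, and then $\rho(d,d')\le 2$. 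Equivalently, the projection of $D_0$ onto the four coordinates $\{1,\dots,5\}\setminus\{i\}$ is an $MDS(1,4,4)$ code, hence has covering radius~$1$ in $B^4$, so no choice of $D_{i,v}\subset B^4$ can be kept at distance $\ge 2$ from it. Thus your proposed split $|D_0|=64$, $|D_1|=96$ is impossible, regardless of how the $96$ words are allocated among the slots.

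The paper sidesteps this by taking the opposite extreme: its explicit table has $D_0=\varnothing$, so that \emph{all} $160$ codewords carry exactly one $A$-coordinate, with $|D_{i,v}|=16$ for every one of the ten slots $(i,v)\in\{1,\dots,5\}\times\{a,b\}$. (In the displayed $6\times 6$ layers this is visible: in the $4\times 4$ core with $x_1,x_2,x_3\in B$, exactly one of $f_1,f_2$ lies in $\{a,b\}$; the border rows, border columns, and the two $A$-layers supply the words with the $A$-symbol in positions $1,2,3$ respectively.) The delicate part is then coordinating the ten $MDS(2,4,4)$ subcodes so that cross-slot distances are $\ge 3$, which the paper resolves by exhibiting the array and appealing to direct verification. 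Your plan can be repaired along these lines, but the ``natural'' first move of anchoring on a full $MDS(2,5,4)$ sub-code over $B$ must be abandoned.
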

The proof is by direct verification of the table below. \vskip7mm

  $
   \begin{array}{ccccccccccccc}
  a &b &2 &3 &0 & 1&\qquad           & 0 &1 &b &a &3 &2 \\
  1 &0 &b &a &2 &3&\qquad           &   a &b &0 &1 &2 &3 \\
  b &a &0 &1 &3 &2&\qquad           &      3 &2 &a &b &1 &0 \\
  3 &2 &a &b &1 &0&\qquad           &      b & a& 3& 2& 0&1 \\
  0 &3 &1 &2 &.  &. &\qquad           &      2 &0 &1 &3 &. &. \\
 2 &1 &3 &0 &. &. &\qquad           &   1 &3 &2 &0 &. &. \\
   \end{array} $

\vskip7mm

$ \begin{array}{ccccccccccccc}
        1& 0& a& b& 3& 2&\qquad           &  b& a& 1& 0& 2& 3\\
        b& a& 2& 3& 1& 0&\qquad           &  1& 0& a& b& 3& 2\\
        3& 2& b& a& 0& 1&\qquad           &  a& b& 2& 3& 0& 1\\
        a& b& 0& 1& 2& 3&\qquad           &  2& 3& b& a& 1& 0\\
        2& 1& 3& 0& .& .&\qquad           &  0& 2& 3& 1& .& .\\
        0& 3& 1& 2& .& .&\qquad           &  3& 1& 0& 2& .& .\\
  \end{array} $

\vskip7mm

$ \begin{array}{ccccccccccccc}
        2& 3& b& a& 1& 0&\qquad           &  a& b& 3& 2& 1& 0\\
        a& b& 1& 0& 3& 2&\qquad           &  3& 2& b& a& 0& 1\\
        0& 1& a& b& 2& 3&\qquad           &  b& a& 0& 1& 3& 2\\
        b& a& 3& 2& 0& 1&\qquad           &  0& 1& a& b& 2& 3\\
        3& 0& 2& 1& .& .&\qquad           &  1& 3& 2& 0& .& .\\
        1& 2& 0& 3& .& .&\qquad           &  2& 0& 1& 3& .& .\\
   \end{array} $

\vskip7mm

 $ \begin{array}{ccccccccccccc}
        b& a& 1& 0& 2& 3&\qquad           &  2& 3& a& b& 0& 1\\
        2& 3& a& b& 0& 1&\qquad           &  b& a& 2& 3& 1& 0\\
        a& b& 3& 2& 1& 0&\qquad           &  1& 0& b& a& 2& 3\\
        0& 1& b& a& 3& 2&\qquad           &  a& b& 1& 0& 3& 2\\
        1& 2& 0& 3& .& .&\qquad           &  3& 1& 0& 2& .& .\\
        3& 0& 2& 1& .& .&\qquad           &  0& 2& 3& 1& .& .\\
   \end{array} $

\vskip7mm

$ \begin{array}{ccccccccccccc}
        0& 2& 3& 1& .& .& \qquad   \       &  1& 2& 0& 3& .& .\\
        3& 1& 0& 2& .& .& \qquad          &  2& 1& 3& 0& .& .\\
        1& 3& 2& 0& .& .& \qquad          &  0& 3& 1& 2& .& .\\
        2& 0& 1& 3& .& .& \qquad          &  3& 0& 2& 1& .& .\\
        .& .& .& .& .& .& \qquad          &  .& .& .& .& .& .\\
        .& .& .& .& .& .& \qquad          &  .& .& .& .& .& .\\
   \end{array} $

\vskip7mm

$ \begin{array}{ccccccccccccc}
        3& 1& 0& 2& .& .&  \qquad  \       &  3& 0& 2& 1& .& .\\
        0& 2& 3& 1& .& .&  \qquad         &  0& 3& 1& 2& .& .\\
        2& 0& 1& 3& .& .&  \qquad         &  2& 1& 3& 0& .& .\\
        1& 3& 2& 0& .& .&  \qquad         &  1& 2& 0& 3& .& .\\
        .& .& .& .& .& .&  \qquad         &  .& .& .& .& .& .\\
        .& .& .& .& .& .&  \qquad         &  .& .& .& .& .& .\\
   \end{array} $

\begin{theorem}
If $q=16(6s\pm 1)+4$, then there exists an $MDS(2,5,q)$ code.
\end{theorem}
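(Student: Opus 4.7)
The plan is to apply Theorem~\ref{theoMOLS} with outer modulus $p=6s\pm 1$ and inner parameters $q=16$, $q_1=20$, $|A|=q_1-q=4$. Indeed $(p-1)\cdot 16+20=16p+4=16(6s\pm 1)+4$, so the output will be an $MDS(2,5,q)$ code of exactly the claimed order. Five ingredients must be produced: a super $MDS(2,5,p)$; an $MDS(2,5,16)$ code $D$; an $MDS(2,5,4)$ code $E$ on $A$; an $MDS(2,5,20)$ code $F$ with $4$-$A$-hole; and an $MDS(2,5,20)$ code $G$ with $5$-$A$-hole, with $F$ and $G$ living on a common $20$-element alphabet and using the same distinguished $4$-element subset $A$.

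Since $p=6s\pm 1$ is coprime to $6$, every prime factor $p_i$ of $p$ satisfies $p_i\ge 5$. For each such prime power, applying Proposition~\ref{proNMbc} with $\varrho=3$ and then $\varrho=4$ produces nested linear $MDS$ codes of distances $3,4,5$ in $Q_{p_i}^5$, i.e.\ a super $MDS(2,5,p_i)$; by Proposition~\ref{proNMb1} (McNeish) the product over the $p_i$ is a super $MDS(2,5,p)$. The codes $D=MDS(2,5,16)$ and $E=MDS(2,5,4)$ come directly from Proposition~\ref{proNMb}, since $16=2^4$ and $4=2^2$ are prime powers and $d=5\le q+1$ in both cases.

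For $F$, we invoke Proposition~\ref{proMOLS} with its own parameters set to $p=4$, $q=4$, $q_1=6$, $k=2$: by Proposition~\ref{proNMbc} the code $MDS(2,5,4)$ decomposes into four disjoint $MDS(3,5,4)$ subcodes, two of which we take as $C_1,C_2$; the filler $D$ is any $MDS(2,5,4)$; and $F_1,F_2$ are two copies of Lemma~\ref{lemMOLS}'s code using disjoint $2$-element hole alphabets $A_1,A_2$. The output is $MDS(2,5,(4-2)\cdot 4+2\cdot 6)=MDS(2,5,20)$ with $4$-$(A_1\cup A_2)$-hole on the $4$-element set $A_1\cup A_2$. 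For $G$, we form the McNeish product $M_{20}$ of linear $MDS(2,5,4)$ and $MDS(2,5,5)$ codes (each contains the zero vector). Setting $A'=Q_4\times\{0\}\subset Q_4\times Q_5$, the intersection $M_{20}\cap (A')^5$ has exactly $|MDS(2,5,4)|=4^3=|A'|^3$ elements, hence is a subcode; deleting it gives, by the observation recorded after the subcode definition in Section~3, an $MDS(2,5,20)$ with $5$-$A'$-hole. A common relabeling of the $20$ symbols identifying $A_1\cup A_2$ with $A'$ places both $F$ and $G$ on the same alphabet with the same hole set $A$.

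Plugging these five ingredients into Theorem~\ref{theoMOLS} yields the desired $MDS(2,5,16p+4)=MDS(2,5,16(6s\pm 1)+4)$. The main technical point is the joint construction of $F$ and $G$: Lemma~\ref{lemMOLS} is the only genuinely non-product ingredient (plain McNeish cannot produce an $MDS(2,5,6)$ code because $MDS(2,5,2)$ and $MDS(2,5,3)$ do not exist), and some care is needed so that both $F$ and $G$ end up on the same $20$-letter alphabet with the same distinguished $4$-subset, which is achieved by simple relabeling.
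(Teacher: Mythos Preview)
Your proof is correct and follows exactly the paper's route: apply Theorem~\ref{theoMOLS} with outer $p=6s\pm1$, inner $q=16$, $q_1=20$, building the $4$-$A$-hole code $F$ via Proposition~\ref{proMOLS} (with $p=q=4$, $k=2$, $q_1=6$) from Lemma~\ref{lemMOLS}, just as the paper does. Two minor remarks: for the super $MDS(2,5,p_i)$ you should cite Proposition~\ref{proNMbb} directly rather than Proposition~\ref{proNMbc} twice, since two separate applications of the latter do not by themselves guarantee a single nested chain $M_2\subset M_1\subset M_0$; and your explicit construction of the $5$-$A$-hole code $G$ (McNeish product of linear $MDS(2,5,4)$ and $MDS(2,5,5)$ codes, then delete the order-$4$ subcode $Q_4\times\{0\}$) actually supplies a detail that the paper's own proof leaves implicit.
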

\begin{proof}
By Lemma \ref{lemMOLS} and Propositions  \ref{proNMbc}  and
\ref{proMOLS} ($p=q=4,k=2,q_1=6$), there exists an $MDS(2,5,20)$
code with $4$-$A$-hole, where $|A|=4$. By Theorem \ref{theoMOLS}
$(q_1=20,q=16,k=4)$ we can obtain an $MDS(2,5,16p+4)$ code if there
exists a super $MDS(2,5,p)$ code. Since any integer $p=6s\pm 1$ is
not divisible by $2$ and $3$, there exists a super $MDS(2,5,p)$ code
by Propositions \ref{proNMbb} and \ref{proNMb1}.
\end{proof}

By Proposition \ref{proEM} all $MDS(2,5,q)$ codes are equivalent to
pairs of orthogonal latin cubes of order $q$. If $6s-1=18i-1$ or
$6s-1=18i+5$, then pairs of orthogonal latin cubes of order
$q=16(6s-1)+4$ were not previously known because in these cases $q$
is divisible by $3$ but it is not divisible by $9$. Ten minimal new
obtained orders (not only of type $q=16(6s-1)+4$) are  $84, 132,
276, 372, 516, 564, 660, 852, 948, 1140$.

\section{ Connection between MDS codes and combinatorial designs}

A {\it Steiner system} with parameters $\tau, d, q$, $\tau\leq d$,
written $S(\tau,d,q)$, is  a set of $d$-element subsets of $Q_q$
(called {\it blocks}) with the property that each $\tau$-element
subset of $Q_q$ is contained in exactly one block.

\begin{theorem}  If  $D_2$ and $D_3$ are Steiner systems $S(2,5,q)$ and  $S(3,5,q)$ respectively
 and $D_2\subset D_3$, then there exits an
$MDS(2,5,q)$ code.\end{theorem}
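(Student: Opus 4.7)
The plan is to construct two strong-orthogonal functions $f_4,f_5\colon Q_q^3\to Q_q$, which by Proposition~\ref{proEM} will give an $MDS(2,5,q)$ code. The heuristic is to use each block of $D_3$ as a local coordinate patch: since every block has size 5 and 5 is prime, Proposition~\ref{proNMb} supplies a local $MDS(2,5,5)$ code on each block $B\in D_3$, i.e., local parity functions $f_4^B,f_5^B\colon B^3\to B$.

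I would then define $(f_4,f_5)$ globally by cases on the support of $(x_1,x_2,x_3)$. When the three entries are pairwise distinct, $\{x_1,x_2,x_3\}$ lies in a unique block $B\in D_3$ by the $S(3,5,q)$ property; set $(f_4,f_5)(x_1,x_2,x_3)=(f_4^B,f_5^B)(x_1,x_2,x_3)$. When the support is a pair $\{a,b\}$, use the unique block $B\in D_2$ containing $\{a,b\}$; by $D_2\subset D_3$ this block is also in $D_3$ and already carries a local code, which ensures consistency with the previous case for every triple $(a,b,c)$ with $c\in B\setminus\{a,b\}$ (since then $B_3(\{a,b,c\})=B$). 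For the singleton case $(a,a,a)$, assign $(f_4,f_5)(a,a,a)=(a,a)$.

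Next I would verify strong orthogonality by examining each of the $\binom{5}{3}=10$ projections of $C=\{(x_1,x_2,x_3,f_4,f_5)\}$ onto 3 coordinates. The representative check is that, for fixed $(a,b)\in Q_q^2$ with $a\neq b$, the map $x_3\mapsto f_4(a,b,x_3)$ is a bijection on $Q_q$. The blocks of $D_3$ through $\{a,b\}$ partition $Q_q\setminus\{a,b\}$ into 3-subsets (apply $S(3,5,q)$ to the triples $\{a,b,c\}$), so this global map splits into local pieces, one per block, plus the two values at $x_3\in\{a,b\}$ provided by $B_2(\{a,b\})$. On each local piece, bijectivity of the $MDS(2,5,5)$ code restricted to a file gives the required contribution. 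Analogous partition/bijection arguments apply to the other nine projections, using the symmetry between the three ``data'' coordinates.

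I expect the hard part to be choosing the family of local codes $\{(f_4^B,f_5^B)\}_{B\in D_3}$ in a mutually compatible way so that the blockwise bijections from the preceding step really combine into a global bijection without collisions. A naive diagonal constraint, such as $\{f_k^B(a,b,a),f_k^B(a,b,b)\}=\{a,b\}$ for both $k=4,5$, forces $f_4^B$ and $f_5^B$ to coincide on too many points of $B^3$ and thus clashes with their orthogonality. The hypothesis $D_2\subset D_3$ has to be used more subtly: the $D_2$-block through each pair $\{a,b\}$ serves as a canonical ``origin'' that calibrates the orderings used on the remaining $D_3$-blocks through that pair, so that the compatibility conditions split into a manageable set that can be satisfied by an explicit choice of local codes on $GF(5)$.
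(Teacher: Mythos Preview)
Your framework is sound and actually matches how the paper's construction can be read functionally, but the proposal stops precisely at the point that needs an idea. You correctly isolate the obstruction: for fixed $a\neq b$, the map $x_3\mapsto f_4(a,b,x_3)$ is assembled from pieces indexed by the $D_3$-blocks through $\{a,b\}$, and one must prevent the images of different pieces from colliding on the shared symbols $a,b$. Your last paragraph gestures at a calibration scheme but does not supply one, so as written the proof is incomplete.

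The paper resolves this not by calibrating orderings but by an asymmetric choice of local codes. On each block $X\in D_2$ it places a full linear $MDS(2,5,5)$ code containing the diagonal $\{(a,a,a,a,a):a\in X\}$. On each block $X\in D_3\setminus D_2$ it places only the $60$ tuples $\{(x_{\tau 1},\ldots,x_{\tau 5}):\tau\in Alt(5)\}$. The point of $Alt(5)$ is that once two coordinates are fixed at $a,b\in X$, the remaining three coordinates run through the \emph{even} permutations of $X\setminus\{a,b\}$; in particular they never take the values $a$ or $b$. Hence the contributions of distinct $D_3$-blocks through $\{a,b\}$ land in pairwise disjoint $3$-sets, and the five values coming from the unique $D_2$-block through $\{a,b\}$ fill in the rest. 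This is exactly the compatibility you were looking for, and it requires no coordination between blocks.

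The paper also uses a cleaner verification than checking ten projections: it shows $|M|=q^3$ by a direct count (using $|Alt(5)|=60$, $|D_2|=q(q-1)/20$, $|D_3|=q(q-1)(q-2)/60$) and shows $\rho(M)\ge 3$ by casework on $|X\cap Y|$ for pairs of blocks, then invokes the Singleton bound. If you want to salvage your bijection argument, replace your generic local $MDS(2,5,5)$ codes on non-$D_2$ blocks by the $Alt(5)$ code; then your partition-and-glue check goes through.
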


\begin{proof}
 Consider a block $X=\{x_1,x_2,x_3,x_4,x_5\}\in    D_3\setminus D_2$. Define a
 set\\
  $M_X=\{(x_{\tau1},x_{\tau2},x_{\tau3},x_{\tau4},x_{\tau5})\ |\
  \tau \in Alt(5)\}$, where $Alt(5)$ is the alternating group.

By Proposition \ref{proNMb}  there exists an $MDS(2,5,5)$ code that
contains
 $(a,a,a,a,a)$ for all $a\in Q_5$.
 Suppose that $X=\{x_1,x_2,x_3,x_4,x_5\}\in  D_2$. Let us define an  $MDS(2,5,5)$
 code $ M_X$
 over the alphabet $X$   such that $M_X$ contains
 $(x_i,x_i,x_i,x_i,x_i)$ for $i=1,\dots,5$.
The intersections of pairs of such codes contain only elements of
type $(a,a,a,a,a)$ for $a\in
 Q_q$. 

 Let us to prove that $M=\bigcup_{X\in D_3}M_X$ is an $MDS(2,5,q)$
 code. The following holds:
 $$|M|=q+|D_2|(5^3-5)+(|D_3|-|D_2|)|Alt(5)|=$$ $$q+5\cdot24\frac{q(q-1)}{ 4\cdot
 5}+  {3\cdot 4\cdot
 5}\left(\frac{q(q-1)(q-2)}{ 3\cdot4\cdot
 5} - \frac{q(q-1)}{ 4\cdot
 5}\right)=q^3.$$

 Suppose that $X\in D_3\setminus D_2$, $Y\in D_3$ and $X\neq Y$. The
 distance between codes
 $M_X$ and $M_Y$ is not less than $3$ because $|X\cap Y|\leq 2$.
Suppose $X,Y\in  D_2$ and $X\neq Y$. Then $|X\cap Y|\leq 1$. If
$x\in M_X$ is not a constant vector, then it contains not more than
$2$ equal symbols. If $x\in M_X$ and $y\in M_Y$ are not constant
vectors, then $\rho(x,y)\geq 3$ by direct verification.

 If
 $x,y\in M_X$ and $X\in D_2$, then $\rho(x,y)\geq 3$ by the definition of
 $M_X$. Any non-constant permutation from $Alt(5)$ permutes $3$ or more elements. Therefore
 for $X\in D_3\setminus D_2$
   we obtain  that $\rho(x,y)\geq 3$ for any distinct $x,y\in M_X$.

   Thus we proved that the code distance of $M$ is at least $3$.
   So,
   $M$ is an
$MDS(2,5,q)$ code by the Singleton bound (Proposition \ref{proNM5}).
 \end{proof}

The natural divisibility conditions for the existence of Steiner
systems $S(2,5,n)$ and $S(3,5,n)$ simultaneously is that $n=5\
\mbox{ or}\ 41 \mod 60$. Steiner systems $S(3,5,41)$ are unknown.
Steiner systems $S(2,5,65)$ and $S(3,5,65)$  exist. Systems
$S(2,q+1,q^3+1)$ are unitals and  systems $S(3,q+1,q^3+1)$ are
spherical geometries if $q$ is a prime power ($q=4$ in this case).
But it is unknown whether the system $S(3,5,65)$ contains the system
$S(2,5,65)$.
 Keevash \cite{Keevash1} and Glock et al. \cite{Glock} proved that the
natural divisibility conditions are sufficient for existence of
Steiner system $S(t,k,n)$ (and inserted Steiner systems) apart from
a finite number of exceptional $n$ given fixed $t$ and $k$.
Therefore it is possible to use the theorem above for constructing
$MDS(2,5,q)$ codes if $q$ is large enough.

\section{Acknowledgments}

My sincere thanks are due to  D.S.Krotov who  programmed the
proposed method and calculated a series of new pairs orthogonal
latin cubes (see
https://ieee-dataport.org/open-access/graeco-latin-cubes).

\end{document}